\newtheorem{thm}{Theorem}[section]
\newtheorem{lem}[thm]{Lemma}
\newtheorem{cor}[thm]{Corollary}
\newtheorem{prop}[thm]{Proposition}
\theoremstyle{definition}
\newtheorem{defns}[thm]{Definitions}
\newtheorem{quest}[thm]{Question}
\theoremstyle{remark}
\numberwithin{equation}{section}
\newcommand{\thmref}[1]{Theorem~\ref{#1}}
\newcommand{\corref}[1]{Corollary~\ref{#1}}
\newcommand{\secref}[1]{\S\ref{#1}}
\newcommand{\propref}[1]{Proposition~\ref{#1}}
\newcommand{\lemref}[1]{Lemma~\ref{#1}}
\newcommand{\Hom}{\operatorname{Hom}}
\newcommand{\Ker}{\operatorname{Ker}}
\newcommand{\Z}{{\mathbb  Z}}
\newcommand{\Q}{{\mathbb  Q}}
\newcommand{\sm}{\wedge}
\newcommand{\ra}{\rightarrow}
\newcommand{\xra}{\xrightarrow}
\newcommand{\hra}{\hookrightarrow}
\begin{document}

\title[Chromatic Smith Theorem]{An elementary proof of the chromatic \\ Smith Fixed Point Theorem}

\author[Balderrama]{William Balderrama}
\email{eqr8nm@virginia.edu}

\author[Kuhn]{Nicholas J.~Kuhn}
\email{njk4x@virginia.edu}

\address{Department of Mathematics \\ University of Virginia \\ Charlottesville, VA 22903}


\date{\today}

\subjclass[2010]{Primary 55M35; Secondary 55N20, 55P42, 55P91.}

\begin{abstract}

A recent theorem by T. Barthel, M. Hausmann, N. Naumann, T. Nikolaus, J. Noel, and N. Stapleton says that if $A$ is a finite abelian $p$--group of rank $r$, then any finite $A$--space $X$ which is acyclic in the $n$th Morava $K$--theory with $n \geq r$ will have its subspace $X^A$ of fixed points acyclic in the $(n-r)$th Morava $K$--theory.  This is a chromatic homotopy version of P.~A.~Smith's classical theorem that if $X$ is acyclic in mod $p$ homology, then so is $X^A$.

The main purpose of this paper is to give an elementary proof of this new theorem that uses minimal background, and follows, as much as possible, the reasoning in standard proofs of the classical theorem.  We also give a new fixed point theorem for finite dimensional, but possibly infinite, $A$-CW complexes, which suggests some open problems.
\end{abstract}

\maketitle

\section{Introduction} \label{introduction}

Fixing a prime $p$ and finite group $G$, say that $G$--space $X$ is a finite $G$--space if its $p$--localization is a retract of the $p$--localization of a finite $G$--CW complex in the $G$--equivariant homotopy category.  We let $X^G$ denote its subspace of fixed points.

Let $K(n)_*$ denote the $n$th Morava $K$--theory at the prime $p$. In particular, $K(1)_*$ is a summand of complex $K$--theory with mod $p$ coefficients, and $K(0)_*$ is rational homology.

A key result in \cite{6 author} can be stated as follows.

\begin{thm} \label{main thm}  Let $A$ be a finite abelian $p$-group of rank $r$, and let $X$ be a finite $A$--space. If $\widetilde K(n)_*(X)=0$ with $n \geq r$, then $\widetilde K(n-r)_*(X^{A})=0$.
\end{thm}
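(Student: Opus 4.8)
The plan is to follow the Borel-equivariant proof of the classical Smith theorem as closely as possible, working one chromatic height at a time: mod $p$ cohomology is replaced by Morava $K$--theory, with the non-nilpotent polynomial generator of $H^*(BC_p;\F_p)$ replaced by the \emph{nilpotent} Euler class in $K(m)^*(BC_{p^k})$. I would first reduce to cyclic groups. Writing $A\cong C_{p^{a_1}}\times\cdots\times C_{p^{a_r}}$ and iterating $X^A=\big(\cdots(X^{C_{p^{a_1}}})\cdots\big)^{C_{p^{a_r}}}$, and using that $X^C$ is again a finite $A/C$--space for any $C\le A$ (it is a retract of $K^C$ for a finite $A$--CW model $K$ of $X$, and $K^C$ is a finite $A/C$--CW complex), it suffices to prove: if $X$ is a finite $C_{p^k}$--space with $\widetilde K(n)_*(X)=0$ and $n\ge 1$, then $\widetilde K(n-1)_*(X^{C_{p^k}})=0$. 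Note that, since $C_{p^k}$ has rank $1$, this cyclic case must drop the height by \emph{exactly} one for every $k$ — dropping more would yield only a strictly weaker conclusion, since $\widetilde K(m)_*Z=0$ forces $\widetilde K(m')_*Z=0$ for $m'\le m$ but not for $m'\ge m$ — so it cannot be bootstrapped from the case $k=1$ and must be proved uniformly in $k$.

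For the cyclic case, write $C=C_{p^k}$. Since $\widetilde K(n)_*(X)=0$, the fibration $X\to EC\times_C X\to BC$ has $K(n)$--acyclic fiber and trivial monodromy on $K(n)_*(X)$, so a comparison of Atiyah--Hirzebruch spectral sequences gives $K(n)_*(EC\times_C X)\cong K(n)_*(BC)$, free of rank $p^{nk}$ over $K(n)_*$; in general $K(m)^*(BC)\cong K(m)^*[x]/(x^{p^{mk}})$ with $|x|=2$, via the $p^k$--series of the formal group. Applying $(-)_{hC}=EC_+\wedge_C(-)$ to the cofiber sequence of $C$--spaces $X^C_+\to X_+\to X/X^C$ — legitimate because a non-fixed point of $X$ has proper isotropy, so $X/X^C$ is free away from its basepoint — yields, via the Adams isomorphism,
$$ \Sigma^\infty_+\big(BC\times X^C\big)\ \xrightarrow{\ g\ }\ \Sigma^\infty_+\big(EC\times_C X\big)\ \longrightarrow\ \Sigma^\infty\big((X/X^C)/C\big). $$
Künneth identifies the source of $g$ with $K(m)_*(BC)\otimes_{K(m)_*}K(m)_*(X^C)$; using that $C$ acts trivially on $X^C$, that $\widetilde K(n)_*(X)=0$, and the companion isomorphism $\widetilde K(n)_*(X/X^C)\cong\widetilde K(n)_{*-1}(X^C)$ coming from $X^C_+\to X_+$, the aim is to compute $K(n)_*((X/X^C)/C)$ and $K(n-1)_*((X/X^C)/C)$ and then, via the transfer $\mathrm{tr}\colon\Sigma^\infty_+BC\to S^0$ and its composites with the covering projection, to read off and kill $\widetilde K(n-1)_*(X^C)$.

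The main obstacle is that the decisive step of the classical proof has no literal analogue here. There one inverts the polynomial class $t\in H^2(BC_p;\F_p)$ to annihilate the singular part and force $\widetilde H^*(X^{C_p};\F_p)=0$; but the Euler class $x\in K(m)^2(BC_{p^k})$ is nilpotent, so the $C_{p^k}$--Tate construction of $K(m)$ vanishes and there is nothing to invert. The real content — the step I expect to be hardest to make elementary — is the \emph{transchromatic} replacement for localization: the height $n$ fixed-point data must be compared with the height $n-1$ data through the differing truncation exponents $p^{nk}$ versus $p^{(n-1)k}$ of the rings $K(m)^*(BC_{p^k})$, and through the $p^k$--series of the relevant formal groups — a concrete avatar of the blueshift theorems for Tate constructions of Morava $K$--theory and $E$--theory. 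Producing a low-machinery form of this comparison, and threading it through the transfer and cofiber bookkeeping so that precisely one chromatic height is lost per cyclic factor, is the crux.
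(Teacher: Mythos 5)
Your reduction to the cyclic case $C=C_{p^k}$, and your observation that this case must be proved uniformly in $k$ with the height dropping by exactly one, both match the paper. But the proposal stops exactly at the point where the proof has to happen: you correctly diagnose that the Euler class in $K(m)^*(BC_{p^k})$ is nilpotent, so the classical localization step has no literal analogue in Morava $K$--theory, and you then defer the ``transchromatic replacement for localization'' as an unresolved crux. That is the gap, and it is the whole content of the theorem. The fix in the paper is to change coefficients: run the entire Borel-equivariant argument in Morava $E$--theory $E_n^*$ rather than in $K(m)$. Since $E_n^*$--isomorphisms of spectra coincide with $K(n)_*$--isomorphisms, the hypothesis $\widetilde K(n)_*(X)=0$ yields $E_n^*(EC\times_C X)\simeq E_n^*(BC)$; and in $E_n^*(BC)\simeq E_n^*[[y_k]]/([p^k](y_k))$ the Euler class is a non-nilpotent element that can genuinely be inverted, so the classical localization argument (filtering $X/X^C$ by skeleta, using that Euler classes restrict to zero on proper subgroups) goes through verbatim and gives $e^{-1}E_n^*(BC)\simeq e^{-1}E_n^*(BC)\otimes_{E_n^*}E_n^*(X^C)$.

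The one-height drop you are looking for is then a purely algebraic computation, due to Hovey and Sadofsky: the localized ring $y_1^{-1}E_n^*(BC_p)\simeq E_n^*((y))/([p](y))$ is Landweber exact and $v_{n-1}$--periodic (this is the elementary avatar of the blueshift you allude to -- one solves $[p](y)\equiv v_{n-1}y^{p^{n-1}}+_F v_ny^{p^n}=0$ for $v_{n-1}$ modulo $(p,v_1,\dots,v_{n-2})$), and the case of $C_{p^k}$ follows because $E_n^*(BC_{p^k})$ is finite free over $E_n^*(BC_p)$. Then $h^*(Y)=e^{-1}E_n^*(BC)\otimes_{E_n^*}\widetilde E_n^*(Y)$ is a cohomology theory with Landweber exact, $v_{n-1}$--periodic coefficients, $h^*(X^C)=0$, and Hovey's criterion (a finite spectrum acyclic for such a theory is $K(n-1)_*$--acyclic) gives $\widetilde K(n-1)_*(X^C)=0$. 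This last step is also where the finiteness of $X$ (hence of $X^C$) is used. Your transfer/Adams-isomorphism bookkeeping in $K(m)$--homology is not needed and, as you yourself observe, cannot be pushed through because the Tate construction of $K(m)$ for $C_{p^k}$ vanishes; the essential missing idea is the passage to a Landweber exact $v_n$--periodic integral lift where inversion of the Euler class is nontrivial.
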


This is a chromatic homotopy theory analogue of the following classical theorem of P.~A.~Smith \cite{PA Smith 1941}.

\begin{thm} \label{classical Smith thm} Let $P$ be a finite $p$-group, and let $X$ be a finite dimensional $P$--space. If $\widetilde H_*(X;\Z/p) = 0$, then $\widetilde H_*(X^{P};\Z/p) = 0$.
\end{thm}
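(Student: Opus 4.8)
The plan is to follow the classical proof of Smith's theorem: reduce to a cyclic group, then study the mod $p$ cellular chains as a module over the group ring. I will assume $X$ is a finite dimensional $P$-CW complex (for an arbitrary finite dimensional $P$-space one argues identically with \v{C}ech cohomology). First I would induct on $|P|$, the case $|P|=1$ being vacuous: a nontrivial finite $p$-group $P$ has a central subgroup $C\cong\Z/p$, and then $X^P=(X^C)^{P/C}$, where $X^C$ is again a finite dimensional CW complex, now carrying an action of $P$ through $P/C$. So it suffices to prove the case $P=\Z/p$: applied to the action of $C$ on $X$, it gives $\widetilde H_*(X^C;\F_p)=0$, and the inductive hypothesis applied to the finite dimensional $P/C$-space $X^C$ then gives $\widetilde H_*(X^P;\F_p)=0$.

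For $P=\Z/p$, let $C_*=C_*(X;\F_p)$, regarded as a chain complex of modules over the group ring $R=\F_p[\Z/p]\cong\F_p[t]/(t^p)$. Since the cells of $X$ not fixed by $\Z/p$ occur in free $\Z/p$-orbits, $C^G_*:=C_*(X^{\Z/p};\F_p)$ is a subcomplex of $C_*$ and the quotient $\overline C_*:=C_*/C^G_*$ is a complex of \emph{free} $R$-modules. The engine of the proof is the elementary observation that in any free $R$-module $M$ one has $t^iM=\ker(t^{\,p-i}\colon M\to M)$ for $0\le i\le p$. Applying this degreewise to $\overline C_*$, and writing $C^{t^i}_*:=t^iC_*+C^G_*$ for the classical Smith subcomplexes, one gets for each $1\le i\le p-1$ (with $j=p-i$) short exact sequences of chain complexes
\[
0\to C^G_*\to C^{t^i}_*\to t^i\overline C_*\to 0
\qquad\text{and}\qquad
0\to C^{t^i}_*\to C_*\xra{\;t^{j}\;} t^{j}\overline C_*\to 0,
\]
together with the filtration $C_*=C^{t^0}_*\supseteq C^{t^1}_*\supseteq\cdots\supseteq C^{t^p}_*=C^G_*$ all of whose $p$ successive subquotients are isomorphic to $\overline C_*\otimes_R\F_p$. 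These are the Smith exact sequences, and the groups $H_*(C^{t^i}_*)$ are the Smith special homology groups.

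Finally I would feed in the hypothesis $\widetilde H_*(X;\F_p)=0$, i.e.\ $H_*(C_*)=\F_p$ concentrated in degree $0$. Since $X$ is finite dimensional, all the complexes above vanish above degree $d:=\dim X$; running the long exact sequences of the Smith sequences downward from degree $d$, and playing the two operators $t$ and $t^{p-1}$ off one another, I expect to reach the Smith inequality $\dim_{\F_p}H_*(X^{\Z/p};\F_p)\le\dim_{\F_p}H_*(X;\F_p)=1$ together with $H_0(X^{\Z/p};\F_p)\ne 0$ (so $X^{\Z/p}\ne\emptyset$). A graded $\F_p$-vector space of total dimension at most $1$ that is nonzero in degree $0$ must be $\F_p$ concentrated in degree $0$, so $\widetilde H_*(X^{\Z/p};\F_p)=0$, as desired; this cyclic case then feeds back into the induction above. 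The delicate step is the extraction of the Smith inequality: the subquotient filtration alone only bounds $\dim H_*(X^{\Z/p};\F_p)$ from \emph{below}, so obtaining the reverse inequality — and simultaneously forcing the surviving homology class into degree $0$ rather than some higher degree — requires a careful simultaneous bookkeeping with both Smith exact sequences, with particular attention to the low-degree terms. Since $X$ need not be finite there is no Euler-characteristic shortcut, and one must genuinely induct on homological degree, finite dimensionality being exactly what lets the induction start.
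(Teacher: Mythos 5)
The paper itself offers no proof of \thmref{classical Smith thm}: it is quoted as Smith's classical result, and the authors only remark that it reduces to the case $P=C_p$ by iteration. So the relevant comparison is with the proof of the classical theorem that the paper's chromatic argument is patterned on, namely the \emph{localization} proof (as in \cite[Ch.~III]{tom Dieck}, and as in \propref{Borel acyclic prop} plus \propref{localization prop} here): acyclicity gives $H^*_{C_p}(X;\F_p)\cong H^*(BC_p;\F_p)$, one inverts the Euler class $y\in H^2(BC_p;\F_p)$ to pass to $y^{-1}H^*(BC_p;\F_p)\otimes H^*(X^{C_p};\F_p)$, and one concludes because $y^{-1}H^*(BC_p;\F_p)\neq 0$. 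You instead take the other standard route, Smith's original chain-level argument, and your outline of it is correct: the reduction to $C_p$ through a central subgroup is exactly the iteration the paper alludes to; the identification $\F_p[C_p]\cong\F_p[t]/(t^p)$ (where your $t$ must be the group generator minus $1$ --- the group generator itself is a unit, so say this explicitly), the degreewise freeness of $C_*/C_*^{C_p}$, the identity $t^iM=\ker(t^{p-i})$ on free modules, and the two Smith exact sequences are all right. The one step you assert rather than carry out is the decisive one: splicing the long exact sequences of your two short exact sequences, alternating $t$ and $t^{p-1}$ and inducting downward from $\dim X$, to get $\sum_{j}\dim \widetilde H_j(X^{C_p};\F_p)\le\sum_{j}\dim \widetilde H_j(X;\F_p)$ together with nonemptiness (cleanest with augmented chain complexes, where an empty fixed set would contribute homology in degree $-1$ and contradict the inequality). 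That bookkeeping is standard --- it is Chapter III.4 of Bredon's \emph{Introduction to Compact Transformation Groups} --- and you have correctly identified both why finite dimensionality is needed to start the induction and why the filtration alone gives an inequality in the useless direction, so I regard this as a correct sketch rather than a gap. The trade-off between the two routes is worth noting: yours is more elementary and yields Floyd's inequality as a byproduct, but it is precisely the approach that does not generalize chromatically (there is no usable theory of equivariant cellular chains over $K(n)_*$), which is why the proof of \thmref{main thm} in this paper is modeled on the localization argument instead.
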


We note that \thmref{main thm} follows by iteration from the cases when $A = C_{p^k}$, the cyclic $p$--group of order $p^k$, and \thmref{classical Smith thm} follows from the case when $P=C_p$.

The main purpose of this note is to give an elementary proof of \thmref{main thm} that follows, as much as possible, the reasoning in standard proofs of the classical theorem. All needed background material about $K(n)_*$ and related theories is in papers published by 2000.  We hope that our presentation will lend some clarity about the interesting remaining problems in this area.

The classical theorem includes the statement that $X^P \neq \emptyset$, while the version of \thmref{main thm} proved in \cite{6 author} implicitly assumes that $X$ has a point fixed by $A$, an assumption we do not need to make. Indeed, the first steps in our proof hold when $X$ is just assumed to be finite dimensional, and not necessarily finite, and lead to the following new fixed point theorem.

\begin{thm} \label{fixed point thm}  Let $A$ be a finite abelian $p$--group of rank $r$, and $X$ a finite dimensional $A$--CW complex.  If $\widetilde K(n)_*(X)=0$ with $n \geq r$, then $X^{A} \neq \emptyset$.
\end{thm}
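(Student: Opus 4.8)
The plan is to follow a standard proof of the classical theorem (\thmref{classical Smith thm}) as closely as possible, replacing mod $p$ homology by $K(n)$--homology and isolating the one place where the chromatic hypothesis must enter. Write $A = C_{p^{k_1}}\times\cdots\times C_{p^{k_r}}$ and let $B=C_{p^{k_1}}$ be a cyclic direct summand; since $X^A=(X^B)^{A/B}$, one wants an induction over the cyclic factors in which each stage passes from a finite-dimensional $B$--CW complex with vanishing reduced $K(m)$--homology to corresponding information, one Morava height lower, about the fixed-point set as an $(A/B)$--CW complex — this is exactly why $n\geq r$ is needed, and it makes the cyclic case $A=C_{p^k}$ the crux. (One should treat $C_{p^k}$ directly, not by iterating the $C_p$--case, which would cost $k$ units of height instead of one.) So fix $A=C_{p^k}$ and suppose, for contradiction, that $X^A=\emptyset$.

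The mechanism is Smith theory applied not to a mod $p$ chain complex but to the cellular filtration seen through $K(n)$. Since $X^A=\emptyset$, every cell of $X$ is induced from a proper subgroup, so the cellular filtration builds $X_+\wedge K(n)$ from pieces $(C_{p^k}/H)_+\wedge S^j\wedge K(n)$ with $H$ proper; the resulting Atiyah--Hirzebruch type spectral sequence is a finite-in-filtration complex of induced $K(n)_*[C_{p^k}]$--modules, to which the classical $\sigma=\sum_g g$ and $\tau=1-g$ machinery of Smith theory applies verbatim. Equivalently, one feeds the cofibre sequences $S(j\mu)_+\wedge X_+\to X_+\to S^{j\mu}\wedge X_+$ (for a character $\mu$ with kernel the maximal proper subgroup) into $K(n)$--homology of Borel constructions, using Ravenel--Wilson's computation that $K(n)_*(BC_{p^j})$ is a finite free $K(n)_*$--module. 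The hypothesis $\widetilde K(n)_*(X)=0$ forces the "free part'' of this data to be small, and classically one now concludes via either the fact that $H^*(BC_{p^k};\F_p)$ contains a polynomial generator (contradicting boundedness of the finite-dimensional object built) or, equivalently, an Euler-characteristic count.

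Both endgames fail over $K(n)$: the Euler class in $K(n)^*(BC_{p^k})$ is nilpotent, and the Euler-characteristic count is unavailable because $X$ is only finite-dimensional, not finite, so its mod $p$ (hence twisted) homology may be infinitely generated. The substitute is the chromatic "blueshift'': the $C_p$--Tate construction lowers chromatic height, in the extreme form $K(n)^{tC_p}\simeq 0$ (Greenlees--Sadofsky). Running the Smith-theoretic comparison but now tracking the higher Atiyah--Hirzebruch differentials — the Milnor operations $Q_n,Q_{n-1},\dots$ — one shows that the height-$n$ constraint descends to a genuine height-$(n-1)$ constraint on the fixed-point data; these are precisely the "first steps'' of the argument and they remain valid for finite-dimensional $X$. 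Iterating the descent brings one down to rational homology, where a free $C_p$--action on a finite-dimensional rationally acyclic complex is impossible by the elementary Euler-characteristic obstruction ($\chi(X)\equiv\chi(X^A)\equiv 0\pmod p$ versus $\chi(X)=1$). This contradiction gives $X^A\neq\emptyset$, and the reduction of the first paragraph promotes the cyclic case to general $A$.

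The hard part is exactly this blueshift/descent step: one must show, using only pre-2000 inputs — the Ravenel--Wilson Morava $K$--theory of $B(\Z/p)^j$ and the Greenlees--Sadofsky Tate vanishing — and deliberately avoiding genuine equivariant stable homotopy theory, that a free, finite-dimensional $C_p$--action with $\widetilde K(n)_*(X)=0$ forces the analogous vanishing at height $n-1$ on the Borel orbit space, and that every step of the classical Smith argument onto which this is grafted survives the weakening of "finite'' to "finite-dimensional.'' Everything else is a transcription of the classical proof.
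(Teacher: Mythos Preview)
Your endgame is false. You assert that a free $C_p$--action on a finite-dimensional rationally acyclic complex is impossible by an Euler-characteristic count, but you have already (correctly) noted that Euler characteristics are unavailable here because $X$ is only finite-dimensional, not finite --- and in fact such actions \emph{do} exist (Goodwillie gives a free $C_2$--action on a rationally acyclic $2$--complex; Leary gives fixed-point-free actions of any finite group on rationally acyclic $3$--complexes). More structurally, your inductive descent --- from $\widetilde K(n)_*(X)=0$ to a height-$(n{-}1)$ condition on $X^B$, iterated over cyclic factors --- would, if valid for finite-dimensional $X$, already prove the finite-dimensional chromatic Smith theorem, which is an open question; and the Tate vanishing $K(n)^{tC_p}\simeq 0$ you invoke does not supply that descent (for a free action the Tate construction vanishes tautologically, and in any case it says nothing directly about $\widetilde K(n-1)_*$ of geometric fixed points).

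The paper's route avoids iteration entirely. One works in Borel $E_n$--\emph{cohomology} for the whole group $A$ at once, inverts the product $e(A)$ of Euler classes of all nontrivial characters $A\to C_p$, and uses a standard localization argument (valid for finite-dimensional $X$, since cells with proper isotropy are killed by $e(A)$) to obtain $e(A)^{-1}E_n^*(BA)\otimes_{E_n^*}E_n^*(X^A)\cong e(A)^{-1}E_n^*(BA)$. The hypothesis $n\geq r$ enters exactly once, via the Hopkins--Kuhn--Ravenel fact that $e(C_p^r)^{-1}E_n^*(BC_p^r)\neq 0$; this forces $E_n^*(X^A)\neq 0$ and hence $X^A\neq\emptyset$. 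No height-lowering, no induction on cyclic factors, and no finiteness assumption on $X$.
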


We note that, if $X$ is any space, then $\widetilde K(n)_*(X)=0 \Rightarrow\widetilde K(r)_*(X)=0$ for $n \geq r \geq 1$ \cite{bousfield K(n) equivs}. (This generalized Ravenel's result \cite[Thm.2.11]{ravenel conjs paper} about finite $X$.) Thus this fixed point theorem for all $n$ is implied by the special case when $n=r$.

In \secref{proof section 1}, we recall some needed background material, and deduce some simple consequences. The theorems are then quickly proved in \secref{proof section 2}.

A final section has various remarks and speculations. In particular, we wonder if some weakening of the finiteness hypothesis of \thmref{main thm} might be possible, and we are curious about the existence of examples showing that \thmref{fixed point thm} is as strong as possible.

\subsection{Acknowledgements} The second author is a PI of RTG NSF grant DMS-1839968, which has supported the research of the first author. Some of the writing of this paper was done while the second author was visiting the Utrecht Geometry Center, with support from Simons Foundation Collaboration Grant 709802.  We thank Neil Strickland for helpful email about literature references, and Tom Goodwillie and Ian Leary for their helpful answers to a Mathoverflow query (see \secref{infinite complexes questions}).

\section{Background material and a localization result} \label{proof section 1}

\subsection{Morava $E$--theory and $E_n^*(BA)$}

Recall the Brown--Peterson homology theory $BP$, with coefficient ring $BP_\ast = \Z_{(p)}[v_1,v_2,\ldots]$.  We work with complex oriented theories $E$ that are $p$--local and with $p$-typical formal group laws. The coefficient ring $E_\ast$ of such a theory is a $BP_\ast$-algebra, and is said to be Landweber exact and $v_n$-periodic if $v_0,v_1,\ldots$ acts as a regular sequence on $E_\ast$ with $v_n$ acting as a unit on $E_\ast/(v_0,\ldots,v_{n-1})$. (Throughout, we let $v_0 = p$, as is standard.)

\begin{lem} \label{homology lemma}  \cite[Cor. 1.12]{hovey splitting conj} Let $E_*$ be Landweber exact and $v_n$--periodic. A spectrum is $E_*$--acyclic if and only if it is $K(m)_*$--acyclic for $0\leq m \leq n$. Thus, if a finite spectrum is $E^*$--acyclic then it is $K(n)_*$--acyclic.
\end{lem}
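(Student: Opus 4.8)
The plan is to treat the two assertions separately, starting with the ``Thus'' clause, which is purely formal. Suppose $Y$ is a finite spectrum with $E^*(Y) = 0$. Spanier--Whitehead duality identifies $E^{-k}(Y)$ with $E_k(DY)$ for the (again finite) dual $DY$, so $DY$ is $E_*$--acyclic; granting the first assertion, $DY$ is then $K(m)_*$--acyclic for all $0 \le m \le n$, in particular $K(n)_*(DY) = 0$, hence $K(n)^*(Y) = 0$; and since $K(n)_*$ is a graded field, the universal coefficient isomorphism $K(n)^*(Y) \cong \Hom_{K(n)_*}(K(n)_*(Y), K(n)_*)$ forces $K(n)_*(Y) = 0$.

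For the ``if and only if'' I would recast the statement as the Bousfield class identity $\langle E\rangle = \langle K(0)\rangle \vee \cdots \vee \langle K(n)\rangle$: a spectrum $X$ is $E_*$--acyclic iff $E \sm X \simeq \ast$, while $X$ is $K(m)_*$--acyclic for all $0 \le m \le n$ iff $\bigl(\bigvee_{m=0}^{n} K(m)\bigr) \sm X \simeq \ast$. The inclusion $\langle K(m)\rangle \le \langle E\rangle$ for $m \le n$ --- the half actually used above, through the case $m = n$ applied to the dual --- follows, via the field property of Morava $K$--theory, from the standard fact that $K(m)_*(E) \ne 0$. The reverse inclusion $\langle E\rangle \le \langle K(0)\rangle \vee \cdots \vee \langle K(n)\rangle$ carries the content. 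Here I would first reduce to the case $E = E(n)$, the $n$th Johnson--Wilson theory: the Landweber exact functor theorem gives $E_*(X) = E_* \otimes_{BP_*} BP_*(X)$ and $E(n)_*(X) = E(n)_* \otimes_{BP_*} BP_*(X)$, and these tensor products vanish on exactly the same $BP_*BP$--comodules (an induction over the Landweber filtration when $X$ is finite, a colimit argument in general), so $\langle E\rangle = \langle E(n)\rangle$; and then $\langle E(n)\rangle = \langle K(0)\rangle \vee \cdots \vee \langle K(n)\rangle$ is Ravenel's theorem \cite{ravenel conjs paper}, which rests on the same filtration, $v_n$--periodicity of $E(n)_*$ confining attention to the subquotients $BP_*/I_j$ with $j \le n$.

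I expect this reverse inclusion to be the one real obstacle, and it is genuinely non-elementary: it rests on the Landweber exact functor theorem and the structure of the chromatic (Landweber) filtration --- exactly the machinery the rest of the paper is designed to avoid. For that reason the cleanest course is to cite \cite[Cor. 1.12]{hovey splitting conj} for the ``if and only if'' and to supply only the short duality argument of the first paragraph for the ``Thus''.
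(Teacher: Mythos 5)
The paper gives no proof of this lemma at all---it simply cites \cite[Cor.~1.12]{hovey splitting conj} for the equivalence and treats the finite-spectrum consequence as immediate, which is exactly the course you end up recommending. Your Spanier--Whitehead duality argument for the ``Thus'' clause is correct and is the standard justification the paper leaves implicit, and your outline of the Bousfield-class identity $\langle E\rangle = \langle K(0)\rangle \vee \cdots \vee \langle K(n)\rangle$ accurately describes the content of the cited result, so the proposal matches the paper's approach.
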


Now let $E_n$ be the $n$th Morava $E$--theory, as in \cite{hovey cohom classes}, \cite{hoveystrickland}, or \cite{hkr}. There are variants of these, so for concreteness, we will say that
$$E_n^* = \Z_p[u^{\pm 1}][[v_1,\ldots,v_{n-1}]],$$
where $u\in E_n^{-2}$, and with complex orientation $y\in E_n^2(BU(1))$ whose associated formal group law $F$ has $p$--series of the form
$$[p](y) = py +_F v_1y^p +_F \cdots +_F v_{n-1} y^{p^{n-1}} +_F u^{p^n-1} y^{p^n}.$$
In particular, $v_n = u^{p^n-1}$. This is a Landweber exact and $v_n$-periodic theory, with the following additional property.

\begin{lem} \cite[Prop.3.6]{hovey cohom classes} \label{k acyclics = e acyclics lem}  If $f: X \ra Y$ is a map of spectra, then $f$ is an $E_n^*$--isomorphism if and only if it is a $K(n)_*$--isomorphism.
\end{lem}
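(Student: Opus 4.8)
The plan is to reinterpret both conditions Bousfield--theoretically via the cofiber of $f$, and then to compare $E_n^*$--acyclicity with $K(n)_*$--acyclicity using two standard structural facts about the completed Morava $E$--theory: that $E_n$ is $K(n)$--local, and that the $2$--periodic Morava $K$--theory $K_n$ --- the spectrum with $\pi_* K_n = E_n^*/\mathfrak{m} = \F_p[u^{\pm 1}]$, where $\mathfrak{m} = (p, v_1, \ldots, v_{n-1})$ --- is obtained from $E_n$ by killing the regular sequence $p, v_1, \ldots, v_{n-1}$.

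First I would reduce to acyclic spectra. A map $f \colon X \ra Y$ of spectra is an $E_n^*$--isomorphism in every degree if and only if its cofiber $C$ has $E_n^*(C) = 0$, i.e.\ the function spectrum $F(C, E_n)$ is contractible; likewise $f$ is a $K(n)_*$--isomorphism if and only if $K(n)_*(C) = 0$. So it suffices to prove, for an arbitrary spectrum $C$, that $F(C, E_n) \simeq \ast$ if and only if $K(n)_*(C) = 0$. I would also record the standard fact that, $K(n)$ being a field spectrum, $K(n) \sm C$ splits as a wedge of suspensions of $K(n)$ indexed by a $K(n)_*$--basis of $K(n)_*(C)$; since cohomology turns wedges into products, $K(n)_*(C) = 0 \Leftrightarrow K(n)^*(C) = 0$, and the same holds with $K_n$ in place of $K(n)$ (and $K_n$ and $K(n)$ have the same acyclic spectra).

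For one implication, suppose $K(n)_*(C) = 0$. Then $C$ is $K(n)_*$--acyclic, so the localization map $C \ra L_{K(n)} C \simeq \ast$ is a $K(n)_*$--equivalence; applying $F(-, E_n)$, which inverts $K(n)_*$--equivalences because $E_n$ is $K(n)$--local, yields $F(C, E_n) \simeq F(\ast, E_n) \simeq \ast$, that is, $E_n^*(C) = 0$. For the converse, suppose $F(C, E_n) \simeq \ast$. Applying $F(C, -)$ to the cofiber sequence $E_n \xra{\,p\,} E_n \ra E_n/p$ exhibits $F(C, E_n/p)$ as the cofiber of a self--map of a contractible spectrum, hence contractible; iterating with the cofiber sequences that successively kill $v_1, \ldots, v_{n-1}$ (each is defined because $p, v_1, \ldots, v_{n-1}$ is a regular sequence on $E_n^*$, so each successive quotient is an $E_n$--module on which the next class acts regularly) we reach $F(C, E_n/\mathfrak{m}) = F(C, K_n) \simeq \ast$, i.e.\ $K_n^*(C) = 0$. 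By the field--spectrum remark this forces $K(n)_*(C) = 0$, and the proof is complete.

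Essentially all of the content sits in the two cited facts about $E_n$, so I do not expect a genuine obstacle; the one point to watch is that we must use the $K(n)$--local (completed) form of Morava $E$--theory. Its $K(n)$--locality is exactly what drives the first implication, and the statement genuinely fails for the uncompleted Johnson--Wilson theory $E(n)$ --- which is Landweber exact and $v_n$--periodic, yet not $K(n)$--local --- so \lemref{homology lemma} alone is not enough. Both inputs (the $K(n)$--locality of $E_n$ and the identification $E_n/\mathfrak{m} \simeq K_n$ through a regular sequence) may be quoted from \cite{hoveystrickland}, after which the argument above is purely formal.
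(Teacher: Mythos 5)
Your argument is correct. Note that the paper does not prove this lemma at all --- it is quoted from Hovey \cite[Prop.\ 3.6]{hovey cohom classes} --- and your proof (the $K(n)$--locality of $E_n$ for one direction, killing the regular sequence $p, v_1, \ldots, v_{n-1}$ to build the $2$--periodic Morava $K$--theory from $E_n$ for the other, plus the field--spectrum splitting to pass between $K(n)$--homology and $K(n)$--cohomology) is exactly the standard argument given in the cited sources, so there is nothing to add.
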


\begin{prop}  \label{Borel acyclic prop} Let $G$ be a finite group and $X$ a $G$--space. If $\widetilde K(n)_*(X) = 0$, then the map $EG \times_G X \ra BG$ induces an isomorphism
$$E_n^*(BG) \simeq E_n^*(EG \times_G X).$$
\end{prop}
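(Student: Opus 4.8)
The plan is to show that the projection $f\colon EG\times_G X\to BG$ induces a $K(n)_*$--equivalence on suspension spectra, and then to invoke \lemref{k acyclics = e acyclics lem} to promote this to an $E_n^*$--isomorphism; since the $E_n^*$--cohomology of a space is that of its suspension spectrum, this is exactly the asserted isomorphism $E_n^*(BG)\cong E_n^*(EG\times_G X)$.

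First I would identify the relevant map of spectra. The space $EG\times_G X$ is the homotopy orbit space $X_{hG}$, and because $\Sigma^\infty_+$ preserves homotopy colimits we have $\Sigma^\infty_+(EG\times_G X)\simeq(\Sigma^\infty_+ X)_{hG}$, the homotopy orbits in spectra of $\Sigma^\infty_+ X$ equipped with its induced $G$--action; under this identification $\Sigma^\infty_+ f$ is obtained from the collapse map $c\colon\Sigma^\infty_+ X\to S^0$ (with $G$ acting trivially on the target) by applying $(-)_{hG}$, using that $(S^0)_{hG}\simeq\Sigma^\infty_+ BG$. The hypothesis $\widetilde K(n)_*(X)=0$ says precisely that $c$ is a $K(n)_*$--equivalence of underlying spectra. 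It therefore suffices to know that forming homotopy orbits along a finite group preserves $K(n)_*$--equivalences. Conceptually this is immediate, since $K(n)\wedge(-)$ commutes with homotopy colimits: thus $K(n)\wedge c_{hG}$ is the homotopy orbits of the equivalence $K(n)\wedge c$, hence an equivalence. In a more hands-on style closer to the classical argument, one instead filters $EG$ by its $G$--CW skeleta, obtaining a filtration of $\Sigma^\infty_+(EG\times_G X)$ mapping to the skeletal filtration of $\Sigma^\infty_+ BG$ whose associated graded map is a wedge of suspensions of $c$, and then runs a skeleton-by-skeleton induction with the long exact sequences in $K(n)_*$ together with passage to the filtered colimit. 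Either way $\Sigma^\infty_+ f$ is a $K(n)_*$--isomorphism, and \lemref{k acyclics = e acyclics lem} completes the proof.

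I do not expect a genuine obstacle here. The two points that want a little care are the identification $\Sigma^\infty_+(EG\times_G X)\simeq(\Sigma^\infty_+ X)_{hG}$ and the homotopy-orbit invariance of $K(n)_*$--equivalences, both of which are standard. The step most likely to give pause is the passage from the conclusion phrased in $K(n)_*$ back to $E_n^*$ for the infinite-dimensional space $EG\times_G X$ --- where $K(n)_*$--acyclicity and $E_n^*$--acyclicity cannot in general be traded for one another --- but this is exactly what \lemref{k acyclics = e acyclics lem} is there to provide.
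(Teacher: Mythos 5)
Your proposal is correct and follows essentially the same route as the paper: reinterpret the hypothesis as saying the collapse map is a $K(n)_*$--equivalence, pass to Borel constructions/homotopy orbits to get a $K(n)_*$--isomorphism $K(n)_*(EG\times_G X)\simeq K(n)_*(BG)$, and then convert to $E_n^*$ via \lemref{k acyclics = e acyclics lem}. The only difference is that you spell out (in two ways) the middle step, which the paper simply cites as a general fact about Borel theories.
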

\begin{proof} There are implications \begin{equation*}
\begin{split}
\widetilde K(n)_*(X)=0 & \Leftrightarrow
 K(n)_*(X) \simeq K(n)_*(pt)\\
  & \Rightarrow K(n)_*(EG\times_G X) \simeq K(n)_*(BG)  \\
  & \Leftrightarrow E_n^*(BG) \simeq E_n^*(EG\times_G X).
\end{split}
\end{equation*}
The previous lemma gives the third implication. The second implication is a special case of a general fact: if $f: X \ra Y$ is a $G$--equivariant map between $G$--spaces that is an isomorphism in a generalized homology theory $E_*$, then it induces an isomorphism on the associated Borel theory  $E_*(EG \times_G X) \xra{\sim} E_*(EG \times_G Y)$.
\end{proof}

We recall some basic calculations as in \cite[\S 5]{hkr}.  The complex orientation $y \in E_n^2(BU(1))$ defines an isomorphism $E_n^*(BU(1)) \simeq E_n^*[[y]]$, and the $p^k$--series satisfies $[p^k](y) \equiv u^{p^{nk}-1}y^{p^{nk}} \mod (v_0,\ldots,v_{n-1},y^{p^{nk}+1})$.

The standard inclusion $i_k: C_{p^k} \hra U(1)$ induces an isomorphism
$$E_n^*(BC_{p^k}) \simeq  E_n^*[[y_k]]/([p^k](y_k)),$$
where $y_k = i_k^*(y)$, and it follows from the Weierstrass preparation theorem that $E_n^*(BC_{p^k})$ is a free $E_n^*$--module with basis ${1,y_k,y_k^2, \dots, y_k^{p^{nk}-1}}$.

Similarly, if $l<k$ and $q: C_{p^k} \ra C_{p^l}$ is the standard quotient map, then the map
$q^*: E_n^*(BC_{p^l}) \ra E_n^*(BC_{p^k})$ satisfies $q^*(y_l) = [p^{k-l}](y_k)$, and induces an isomorphism
$$ E_n^*(BC_{p^k}) \simeq E_n^*(BC_{p^l})[[y_k]]/([p^{k-l}](y_k)-y_l).$$
This makes $E_n^*(BC_{p^k})$ into a free $E_n^*(BC_{p^l})$ module of rank $p^{n(k-l)}$.

From these calculations a couple of general facts follow.

\begin{lem} \label{E(BA) lemma} {\bf (a)} \  If $A$ is a finite abelian $p$-group, then $E_n^*(BA)$ is a free $E_n^*$--module of rank $|A|^n$, and there is a natural isomorphism
$$ E_n^*(BA) \otimes_{E_n^*} E_n^*(X) \simeq E_n^*(BA \times X)$$
for any space $X$.

\noindent{\bf (b)} \  If $q: A \ra \bar A$ is an epimorphism between two finite abelian $p$-groups, then $q^*: E_n^*(B\bar A) \ra E^*_n(BA)$ makes $E^*_n(BA)$ into a free $E^*_n(B\bar A)$--module of rank $\lvert\Ker q\rvert^n$.
\end{lem}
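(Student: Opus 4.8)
The plan is to deduce both statements from the cyclic calculations just recorded, using the structure theorem for finite abelian $p$-groups.

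For part (a), write $A \simeq C_{p^{k_1}} \times \cdots \times C_{p^{k_s}}$, so $BA \simeq BC_{p^{k_1}} \times \cdots \times BC_{p^{k_s}}$; by induction on $s$, the claimed isomorphism reduces to a Künneth isomorphism in the cyclic case,
$$E_n^*(BC_{p^k} \times Z) \simeq E_n^*(BC_{p^k}) \otimes_{E_n^*} E_n^*(Z),$$
natural in the space $Z$. I would prove this using the circle bundle $S^1 \ra BC_{p^k} \ra BU(1)$, whose Euler class is $[p^k](y) \in E_n^2(BU(1))$. Since $BU(1)$ is the union of the finite complexes $\C P^m$, for each of which $E_n^*(\C P^m \times Z) \simeq E_n^*(Z)[y]/(y^{m+1})$ by the ordinary Künneth theorem (the relevant cohomology being $E_n^*$-free), and since the resulting tower has surjective transition maps, the Milnor $\lim^1$ sequence gives $E_n^*(BU(1) \times Z) \simeq E_n^*(Z)[[y]]$. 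The Gysin sequence of the circle bundle, crossed with $Z$, then reads
$$\cdots \ra E_n^*(Z)[[y]] \xra{\,\cdot\,[p^k](y)\,} E_n^*(Z)[[y]] \ra E_n^*(BC_{p^k} \times Z) \ra \cdots .$$
By the Weierstrass preparation theorem, in $E_n^*[[y]]$ one has $[p^k](y) = e(y)\,g(y)$ with $e(y)$ a unit and $g(y)$ monic of degree $p^{nk}$; so multiplication by $[p^k](y)$ on $E_n^*(Z)[[y]]$ is an isomorphism followed by multiplication by a monic polynomial, which is injective. Hence the Gysin sequence splits into short exact sequences, identifying $E_n^*(BC_{p^k} \times Z)$ with $E_n^*(Z)[[y]]/(g(y))$, which the division algorithm exhibits as the free $E_n^*(Z)$-module on $1, y, \dots, y^{p^{nk}-1}$, that is, as $E_n^*(BC_{p^k}) \otimes_{E_n^*} E_n^*(Z)$, naturally and compatibly with the ring structure. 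Counting ranks gives $\prod_i p^{nk_i} = |A|^n$.

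For part (b), I would invoke the classical fact that any subgroup $K$ of a finite abelian $p$-group $A$ admits a \emph{stacked basis}: a cyclic decomposition $A = \langle x_1 \rangle \oplus \cdots \oplus \langle x_s \rangle$ with $x_i$ of order $p^{a_i}$, together with integers $0 \le c_i \le a_i$, such that $K = \langle p^{c_1} x_1 \rangle \oplus \cdots \oplus \langle p^{c_s} x_s \rangle$. Applied to $K = \Ker q$, this presents $\bar A = A/K$ as $\langle \bar x_1 \rangle \oplus \cdots \oplus \langle \bar x_s \rangle$ with $\bar x_i$ of order $p^{c_i}$, and identifies $q$ with the product of the standard quotient maps $q_i \colon C_{p^{a_i}} \ra C_{p^{c_i}}$. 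By part (a), $E_n^*(BA)$ is then the tensor product over $E_n^*$ of the rings $E_n^*(BC_{p^{a_i}})$, and $E_n^*(B\bar A)$ that of the $E_n^*(BC_{p^{c_i}})$, compatibly with $q^*$ and $\bigotimes_i q_i^*$. Each $q_i^*$ makes $E_n^*(BC_{p^{a_i}})$ a free $E_n^*(BC_{p^{c_i}})$-module of rank $p^{n(a_i - c_i)}$ --- this is the cyclic-to-cyclic statement recalled above when $c_i > 0$, and part (a) itself when $c_i = 0$ --- so these module structures tensor up to exhibit $E_n^*(BA)$ as a free $E_n^*(B\bar A)$-module of rank $\prod_i p^{n(a_i - c_i)} = p^{n \sum_i (a_i - c_i)} = |\Ker q|^n$.

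The point needing most care in (a) is that multiplication by $[p^k](y)$ must remain a non-zero-divisor after base change to the potentially ill-behaved ring $E_n^*(Z)$; this is exactly why one factors the Euler class by Weierstrass preparation over the regular local ring $E_n^*$ and then appeals only to the wholly general division algorithm by a monic polynomial. In (b), one must check that the stacked basis genuinely presents $q$ as a product of \emph{standard} cyclic quotient maps, so that the computations recalled above apply verbatim; the group-theoretic input itself is classical and I would simply cite it. (Alternatively, (b) can be reduced to the case $|\Ker q| = p$ by choosing a composition series of $\Ker q$, the abelian extensions of $\bar A$ by $C_p$ then being classified directly.)
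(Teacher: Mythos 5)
Your overall route -- reduce to cyclic factors, establish a K\"unneth isomorphism for $BC_{p^k}\times Z$ via the Gysin sequence of $S^1 \ra BC_{p^k}\ra BU(1)$, and handle (b) by a stacked basis for $\Ker q$ -- is exactly the argument the paper leaves implicit, and part (b) as you give it is fine. But the step you yourself flag as "the point needing most care" in part (a) contains a genuine gap. After Weierstrass preparation you have $[p^k](y)=e(y)g(y)$ with $g$ a \emph{distinguished} polynomial: monic of degree $d=p^{nk}$ with lower coefficients in the maximal ideal $\mathfrak{m}\subset E_n^*$. Your claim that "the wholly general division algorithm by a monic polynomial" identifies $S[[y]]/(g)$ with the free $S$-module on $1,y,\dots,y^{d-1}$ for an arbitrary coefficient ring $S$ is false: division with remainder by a monic polynomial is a statement about $S[y]$, not $S[[y]]$. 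For instance $\Z[[y]]/(y-1)=0$, since $y-1$ is a unit in $\Z[[y]]$, whereas $\Z[y]/(y-1)=\Z$. What actually makes $E_n^*[[y]]/(g)$ free of rank $d$ is Weierstrass \emph{division}, whose convergence argument uses that the lower coefficients of $g$ lie in $\mathfrak{m}$ and that the coefficient ring is $\mathfrak{m}$-adically complete. That holds for $E_n^*$ itself, and for $E_n^*(Z)$ with $Z$ a finite complex (a finitely generated module over a complete Noetherian local ring is complete), but it is not evident for $E_n^*(Z)$ with $Z$ arbitrary -- and the lemma is applied in the paper to $X^A$, which need not be finite.

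The standard repair keeps your computation but runs it only at $Z=pt$: once $E_n^*(BA)$ is known to be a finitely generated free $E_n^*$-module (which your argument, or the calculations recalled in the paper, does establish), choose a basis and realize it as a map of spectra $\bigvee_i \Sigma^{d_i}E_n \ra F(BA_+,E_n)$. This map is an isomorphism on homotopy groups by construction, hence an equivalence, and evaluating both sides on an arbitrary space $X$ gives the natural isomorphism $E_n^*(BA)\otimes_{E_n^*}E_n^*(X)\simeq E_n^*(BA\times X)$ with no completeness issues. (Equivalently: both sides are cohomology theories in $X$ satisfying the wedge axiom -- the left side because $E_n^*(BA)$ is finitely generated free, so tensoring with it commutes with arbitrary products -- and the comparison map is an isomorphism on $S^0$.) With part (a) secured this way, your deduction of part (b) goes through unchanged.
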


\subsection{Localization} \label{localizations}

Let $A$ be a finite abelian $p$-group. Observe that the group $C_p^\times$ of automorphisms of $C_p$ acts freely on the set of nontrivial homomorphisms  $\alpha: A \ra C_p$ via postcomposition.

If $\alpha: A \ra C_p$ is a homomorphism, we let $e(\alpha) = \alpha^*(y_1) \in E^2_n(BA)$. This is the Euler class of the 1--dimensional complex representation defined by the composite $A \xra{\alpha} C_p \hra U(1)$.  With this notation, we define two Euler classes $e(A), \bar e(A) \in E_n^*(BA)$ as follows.

\begin{defns} Let $e(A) = \prod e(\alpha)$, with the product over all nontrivial $\alpha: A \ra C_p$.  Let $\bar e(A) = \prod e(\alpha)$, with the product over one representative of each $C_p^{\times}$--orbit. The definition of $\bar e(A)$ involves choices, but, for concreteness, we let $\bar e(C_p) = y_1$, i.e.\ we choose the identity $C_p \ra C_p$.
\end{defns}

Basic facts about these are summarized in the next lemma.

\begin{lem} \label{Euler class lemma} {\bf (a)} \ If $i: A^{\prime} \hra A$ is the inclusion of a proper subgroup, then $i^*(e(A)) = i^*(\bar e(A)) = 0$.

\noindent{\bf (b)} \ $e(A)^{-1}E_n^*(BA) = \bar e(A)^{-1}E_n^*(BA)$.
\end{lem}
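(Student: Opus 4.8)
The plan is to handle the two parts separately, using only naturality of the Euler class construction $\alpha \mapsto e(\alpha) = \alpha^*(y_1)$ together with the recalled structure of $E_n^*(BC_p)$.

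\textbf{Part (a).} Since $A'$ is proper in $A$, the quotient $A/A'$ is a nontrivial finite abelian $p$--group and therefore admits a surjection onto $C_p$ (for instance, project $A/A'$ onto a line in the nonzero $\F_p$--vector space $(A/A')/p(A/A')$). Precomposing with $A \ra A/A'$ gives a nontrivial homomorphism $\alpha: A \ra C_p$ with $A' \subseteq \Ker\alpha$. Then $\alpha\circ i: A' \ra C_p$ is trivial, so $i^*(e(\alpha)) = (\alpha\circ i)^*(y_1) = 0$, because $y_1$ lies in the reduced group $\widetilde{E}_n^2(BC_p)$ and the classifying map of a trivial homomorphism is null. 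As $e(\alpha)$ is a factor of $e(A)$, this forces $i^*(e(A)) = 0$. For $\bar e(A)$, note that every homomorphism in the $C_p^\times$--orbit of $\alpha$ has the same kernel as $\alpha$, hence also contains $A'$; thus whichever orbit representative is chosen in forming $\bar e(A)$, one of its factors restricts to $0$ along $i$, giving $i^*(\bar e(A)) = 0$.

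\textbf{Part (b).} The crucial observation is that for $a \in C_p^\times$ and any $\alpha: A \ra C_p$, the Euler classes $e(a\circ\alpha)$ and $e(\alpha)$ differ by a unit of $E_n^*(BA)$. Indeed, regarding $a$ as an automorphism of $C_p \subset U(1)$, it is the restriction of the $a$th power map of $U(1)$, so naturality of the formal group law gives $a^*(y_1) = [a]_F(y_1)$. Since $[a]_F(t) = at + \cdots$ with $a$ a unit of $E_n^*$, the power series $[a]_F(t)/t$ is a unit of $E_n^*[[t]]$, so its image is a unit of $E_n^*(BC_p) = E_n^*[[y_1]]/([p](y_1))$; write $[a]_F(y_1) = y_1\cdot w$. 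Applying $\alpha^*$ yields $e(a\circ\alpha) = e(\alpha)\cdot\alpha^*(w)$ with $\alpha^*(w)$ a unit. Because $C_p^\times$ acts freely on nontrivial homomorphisms $\alpha: A \ra C_p$ with orbits of size $p-1$, grouping the factors of $e(A)$ by orbit shows that $e(A)$ equals a unit times $\bar e(A)^{p-1}$. Since $\bar e(A)$ is invertible in any ring where $\bar e(A)^{p-1}$ is, and conversely, inverting $e(A)$ and inverting $\bar e(A)$ produce the same localization of $E_n^*(BA)$.

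Both arguments are elementary; the only point requiring care is the unit comparison in part (b), where one must be sure that the leading coefficient $a$ of the $a$--series is genuinely invertible in $E_n^* = \Z_p[u^{\pm 1}][[v_1,\dots,v_{n-1}]]$ and that passing to the quotient ring and then applying $\alpha^*$ preserves units. Beyond this bookkeeping I do not anticipate a substantive obstacle.
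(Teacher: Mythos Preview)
Your proof is correct; part (a) is essentially identical to the paper's. For part (b) you take a somewhat different route. The paper argues in two steps: one inclusion is trivial since $\bar e(A)$ divides $e(A)$; for the other, given $m\in\{1,\dots,p-1\}$ they pick $s$ with $sm\equiv 1\pmod p$, use the relation $[p](y_1)=0$ in $E_n^*(BC_p)$ to get $e(\alpha)=[s]\bigl([m](e(\alpha))\bigr)$, and conclude that $[m](e(\alpha))$ divides $e(\alpha)$. You instead observe directly that $[a]_F(t)/t$ has constant term $a$, which is a unit in the $p$--local ring $E_n^*$, so $e(a\circ\alpha)$ and $e(\alpha)$ already differ by a unit of $E_n^*(BA)$; this yields the stronger conclusion $e(A)=(\text{unit})\cdot\bar e(A)^{\,p-1}$. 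Your argument is a bit more direct and gives more, at the cost of using the $p$--locality of the coefficients; the paper's divisibility argument relies only on the relation $[p](y_1)=0$ and would go through in any complex oriented theory where that holds.
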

\begin{proof} $e(A)$ and $\bar e(A)$ are products of the various $e(\alpha)$. If $A^{\prime}$ is a proper subgroup of $A$, then at least one $\alpha$ restricts to the trivial representation of $A^{\prime}$. As the Euler class of the trivial representation vanishes, statement (a) follows.

To prove statement (b), we first note that, since $\bar e(A)$ divides $e(A)$, inverting $e(A)$ also inverts $\bar e(A)$.  To see the converse, we need to show that, given a nontrivial $\alpha: A \ra C_p$, inverting $e(\alpha)$ also inverts $e(\beta)$ for all $\beta$ in the $C_p^{\times}$--orbit of $\alpha$.  Such $\beta$ are given by the composites $A \xra{\alpha} C_p \xra{m} C_p$, with $m\in\{1, \dots, p-1\}$.

Since $e(m\circ \alpha) = [m](e(\alpha))$, it suffices to show that inverting $e(\alpha)$ also inverts $[m](e(\alpha))$ for any $m\in\{1,\ldots,p-1\}$.  One way to see this goes as follows.  Choose $s$ such that $sm \equiv 1 \mod p$. Then $y_1 = [s]([m](y_1)) \in E_n^*(BC_p)$, so that $e(\alpha) = [s]([m](e(\alpha))) \in E_n^*(BA)$.  Since $x$ always divides $[s](x)$, we see that $[m](e(\alpha))$ divides $e(\alpha)$, and so inverting $e(\alpha)$ also inverts $[m](e(\alpha))$.
\end{proof}

\begin{prop} \label{localization prop}  Let $A$ be a finite abelian $p$--group, and let $e$ be either $e(A)$ or $\bar e(A)$.  If $X$ is a finite dimensional $A$--CW complex, then the inclusion $ X^A \hra X$ induces an isomorphism
$$ e^{-1}E_n^*(EA\times_A X) \xra{\sim} e^{-1}E_n^*(BA \times X^A).$$
\end{prop}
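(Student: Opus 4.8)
The plan is to reduce to a statement about cells and then induct on the (finite) number of cells of $X$, exactly as in the classical Smith theory argument. First I would dispose of the trivial normalization: if $A$ is trivial there is nothing to prove, and by \lemref{Euler class lemma}(b) it suffices to treat a single choice of $e$, say $e = e(A)$, since inverting one inverts the other. Next, reduce to $X$ being an actual finite $A$-CW complex: since $e^{-1}E_n^*(-)$ is a cohomology theory on $A$-spaces (localization is exact) and $X$ is by hypothesis finite dimensional, I would build $X$ up skeleton by skeleton — but the genuinely load-bearing induction is a cellular one, attaching a single free or non-free equivariant cell at a time.

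The heart of the argument is the computation on a single orbit cell $A/A' \times D^k$, attached along $A/A' \times S^{k-1}$. By excision and the suspension isomorphism, proving the isomorphism for such an attachment reduces to the claim that $e^{-1}E_n^*(EA \times_A (A/A')_+) = 0$ when $A'$ is a \emph{proper} subgroup, while for $A' = A$ the orbit is a fixed point and the inclusion $X^A \hra X$ is an equality there, giving the isomorphism tautologically (matching the $BA \times X^A$ side). For the proper-subgroup case, $EA \times_A (A/A')_+ \simeq BA'_+$, so $E_n^*(EA \times_A (A/A')_+) \cong E_n^*(BA')$, and the map to $E_n^*(BA)$ used to localize is the transfer/restriction interplay — concretely, the $E_n^*(BA)$-module structure is via $i^*: E_n^*(BA) \to E_n^*(BA')$ for $i: A' \hra A$. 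By \lemref{Euler class lemma}(a), $i^*(e(A)) = 0$, so $e(A)$ acts as zero on $E_n^*(BA')$; hence $e(A)^{-1}E_n^*(BA') = 0$. This is the key vanishing that kills all non-fixed cells.

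Assembling: run the induction on the number of non-fixed cells. The base case is $X = X^A$ (all cells fixed), where the statement is trivial since $EA \times_A X^A = BA \times X^A$. For the inductive step, write $X = X' \cup_{A/A' \times S^{k-1}} (A/A' \times D^k)$ with $A'$ proper; the cofiber sequence gives a long exact sequence in $e^{-1}E_n^*(EA \times_A -)$, and the contributions of the cell and its attaching sphere vanish by the previous paragraph (both are built from $A/A'$ with $A'$ proper), so $e^{-1}E_n^*(EA \times_A X') \to e^{-1}E_n^*(EA \times_A X)$ is an isomorphism; meanwhile $X'^A = X^A$, so the target $e^{-1}E_n^*(BA \times X^A)$ is unchanged, and the inductive hypothesis for $X'$ finishes it. I expect the main technical obstacle to be bookkeeping the naturality of the $E_n^*(BA)$-module structures through the excision/cofiber identifications — ensuring that the module structure on $E_n^*(BA')$ entering the vanishing step is genuinely restriction along $i$, so that \lemref{Euler class lemma}(a) applies — rather than anything deep; the finite-dimensionality hypothesis is what guarantees the induction terminates and that localization commutes with the relevant (co)limits.
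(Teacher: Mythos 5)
Your key computation is exactly the one the paper uses: for a proper subgroup $A' < A$ one has $EA_+\sm_A (A/A')_+ \simeq BA'_+$, the $E_n^*(BA)$--module structure on $E_n^*(BA')$ is restriction along $i\colon A'\hra A$, and $i^*(e)=0$ by \lemref{Euler class lemma}(a), so localization at $e$ kills every orbit with proper isotropy. That is the heart of the matter and you have it right.

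The gap is in how you assemble this. You propose to ``induct on the (finite) number of cells'' and assert that ``the finite-dimensionality hypothesis is what guarantees the induction terminates.'' It does not: a finite dimensional $A$--CW complex may have infinitely many cells in each dimension, and the proposition is stated (and needed, for \thmref{fixed point thm}) in exactly that generality. A cell-by-cell induction neither terminates nor passes to the colimit, since $E_n^*(-)$ and the localization $e^{-1}(-)$ do not interact well with the inverse limits arising from an infinite ascending union of subcomplexes. (There is also a smaller ordering issue: cells are attached in order of dimension, so you cannot always peel off a non-fixed cell last; a fixed top cell changes both sides of the map and needs a separate five-lemma step, which your induction ``on the number of non-fixed cells'' does not provide.) The paper's assembly avoids both problems: the cofibration $X^A \ra X \ra X/X^A$ reduces the claim to $e^{-1}\widetilde E_n^*(EA_+\sm_A X/X^A)=0$, and one then filters $X/X^A$ by its finitely many equivariant skeleta. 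Each subquotient is a possibly infinite wedge $\bigvee_i \Sigma^j (A/A_i)_+$ with all $A_i$ proper; its $E_n$--cohomology is the product $\prod_i E_n^*(BA_i)$, on which $e$ acts as zero, so its localization vanishes outright, with no need to commute localization past any limit. With that replacement your argument becomes the paper's proof.
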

\begin{proof}  This is an application of classic localization theory, as in \cite[Chapter III]{tom Dieck}, and a simple proof goes as follows. For notational simplicity, let $F = X^{A}$. The $A$--equivariant cofibration sequence $F \ra X \ra X/F$ induces a cofibration sequence
$$ BA \times F \ra EA \times_A X \ra EA_+ \sm_A X/F,$$
so we need to show that $e^{-1}\widetilde E_{n}^*(EA_+\sm_{A} X/F ) = 0$.  Since $X$ is finite dimensional, $X/F$ has a finite filtration by its equivariant skeleta $(X/F)^j$, so it suffices to show $e^{-1}\widetilde E_{n}^*(EA_+\sm_{A} (X/F)^j/(X/F)^{j-1} ) = 0$ for each $j$.  One has an equivariant equivalence of the form
$$(X/F)^j/(X/F)^{j-1} \simeq \bigvee_i \Sigma^j (A/A_i)_+$$
where each $A_i$ is a {\em proper} subgroup of $A$.  Since $EA_+ \sm_A (A/A_i)_+ \simeq BA_{i+}$, we need to show that $\displaystyle e^{-1} \prod_i E_{n}^*(BA_i) = 0$.  But this follows immediately from \lemref{Euler class lemma}(a).
\end{proof}

\section{Proofs of the theorems} \label{proof section 2}

\propref{Borel acyclic prop}, \propref{localization prop}, and \lemref{E(BA) lemma} combine to prove the following theorem, which gets us much of the way towards the proofs of \thmref{main thm} and \thmref{fixed point thm}.

\begin{thm} \label{part 1 thm}  Let $A$ be a finite abelian $p$--group, and let $X$ be a finite dimensional $A$--CW complex.  If $\widetilde K(n)_*(X)=0$, then, with $e$ either $e(A)$ or $\bar e(A)$,  the map $X^A \ra pt$ induces an isomorphism
$$e^{-1}E_n^*(BA) \xra{\sim} e^{-1}E_n^*(BA) \otimes_{E_n^*} E_n^*(X^A).$$
\end{thm}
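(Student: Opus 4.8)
The plan is to deduce the theorem by chaining together \propref{Borel acyclic prop}, \propref{localization prop}, and \lemref{E(BA) lemma}(a), keeping careful track of the maps of spaces involved. Write $F = X^A$; this carries the trivial $A$-action, so that $EA \times_A F = BA \times F$. The map $F \to pt$ factors through the $A$-equivariant maps $F \hookrightarrow X \to pt$, and applying the Borel construction $EA \times_A(-)$ produces a composite of spaces
\[
BA \times F \longrightarrow EA \times_A X \longrightarrow BA
\]
which is nothing but the projection $BA \times F \to BA$ (identity on $BA$, collapsing $F$). Passing to $E_n^*$ and invoking the natural Künneth identification of \lemref{E(BA) lemma}(a), the induced composite
\[
E_n^*(BA) \longrightarrow E_n^*(EA \times_A X) \longrightarrow E_n^*(BA \times F) \cong E_n^*(BA) \otimes_{E_n^*} E_n^*(F)
\]
is exactly the map named in the statement. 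So it suffices to show that, after inverting $e$, each of the two maps in this composite becomes an isomorphism.

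For the first map, the hypothesis $\widetilde K(n)_*(X) = 0$ lets us apply \propref{Borel acyclic prop} with $G = A$: the map $E_n^*(BA) \to E_n^*(EA \times_A X)$ is already an isomorphism before any localization, hence remains one after inverting $e$. For the second map, since $X$ is a finite dimensional $A$-CW complex, \propref{localization prop} says precisely that $e^{-1}E_n^*(EA \times_A X) \to e^{-1}E_n^*(BA \times F)$ is an isomorphism, for $e$ taken to be either $e(A)$ or $\bar e(A)$. Finally, localization is exact and commutes with $\otimes_{E_n^*}$, so \lemref{E(BA) lemma}(a) upgrades to a natural identification $e^{-1}E_n^*(BA \times F) \cong e^{-1}E_n^*(BA) \otimes_{E_n^*} E_n^*(F)$. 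Composing, $e^{-1}E_n^*(BA) \xrightarrow{\sim} e^{-1}E_n^*(BA) \otimes_{E_n^*} E_n^*(F)$, as desired.

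The argument is essentially formal bookkeeping, and there is no serious obstacle; the one point that requires a moment's care is the claim in the first paragraph that the composite of the two space-level maps is the projection $BA \times F \to BA$ — that is, that the identifications $EA \times_A F = BA \times F$ and $EA \times_A pt = BA$ are compatible with the Borel-construction functor and with the Künneth isomorphism of \lemref{E(BA) lemma}(a). Everything else is a direct appeal to the three cited results together with exactness of localization.
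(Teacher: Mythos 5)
Your proof is correct and is exactly the argument the paper intends: the paper gives no written proof, stating only that Proposition~\ref{Borel acyclic prop}, Proposition~\ref{localization prop}, and Lemma~\ref{E(BA) lemma} ``combine'' to give the theorem, and your chain of maps is the evident way to combine them. The one point you flag for care --- compatibility of the identifications $EA\times_A F = BA\times F$ and $EA\times_A pt = BA$ with the Borel construction and the K\"unneth isomorphism --- is indeed the only thing to check, and it holds by naturality.
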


Now we need to know something about $e^{-1}E_n^*(BA)$.
The following algebraic result of Mark Hovey and Hal Sadofsky will suffice to prove \thmref{main thm}.

\begin{prop} \label{hovey sadofsky prop} Let $e = \bar e(C_p) = y \in E_n^*(BC_p) \simeq E_n^*[[y]]/([p](y))$. The ring $e^{-1}E_n^*(BC_p) \simeq E_n^*((y))/([p](y))$ is Landweber exact and $v_{n-1}$-periodic.
\end{prop}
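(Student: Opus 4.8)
The plan is a purely algebraic argument built on the shape of the $p$--series of $F$ together with Weierstrass preparation. The asserted presentation is immediate, since localization commutes with quotients: $(E_n^*[[y]]/([p](y)))[y^{-1}] = E_n^*[[y]][y^{-1}]/([p](y)) = E_n^*((y))/([p](y))$. Write $R$ for this ring; it is automatically $p$--local and inherits the $p$--typical complex orientation of $E_n^*$, so the notions of Landweber exactness and $v_{n-1}$--periodicity apply to it. To verify the two properties, the idea is to analyse the quotients $R_k := R/(v_0,\dots,v_{k-1})$ for $0 \le k \le n-1$ (with $v_0 = p$ as usual).

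Set $S_k = E_n^*/(v_0,\dots,v_{k-1})$, so that $S_0 = E_n^*$ and $S_k \cong \F_p[u^{\pm 1}][[v_k,\dots,v_{n-1}]]$ for $k \ge 1$; each $S_k$ is an integral domain over which Weierstrass preparation applies exactly as over $E_n^*$, with (graded) maximal ideal $\mathfrak m_k = (v_k,\dots,v_{n-1})$, and one checks directly that $E_n^*((y))/(v_0,\dots,v_{k-1}) = S_k((y))$. Reducing the given $p$--series modulo $(v_0,\dots,v_{k-1})$ kills its first $k$ summands, leaving $[p](y) = v_k y^{p^k} +_F \cdots +_F u^{p^n-1}y^{p^n}$ over $S_k$; its term of lowest degree is $v_k y^{p^k}$, with coefficient exactly $v_k$, and modulo $\mathfrak m_k$ it becomes $u^{p^n-1}y^{p^n}$. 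Weierstrass preparation over $S_k$ therefore produces a factorization $[p](y) = \omega_k(y)\, y^{p^k}\, h_k(y)$, with $\omega_k \in S_k[[y]]^{\times}$ and $h_k \in S_k[y]$ monic of degree $p^n - p^k \ge 1$ (the power $y^{p^k}$ appearing because the lowest-degree term of $[p](y)$ over $S_k$ has degree $p^k$), all of whose non-leading coefficients lie in $\mathfrak m_k$ and whose constant term is $h_k(0) = v_k\varepsilon_k$ for a unit $\varepsilon_k \in S_k$. Since $\omega_k(y)$ and $y$ are units in $S_k((y))$, this yields
$$ R_k \;\cong\; S_k((y))/(h_k(y)) \;\cong\; \bigl(S_k[y]/(h_k(y))\bigr)[y^{-1}], $$
where $S_k[y]/(h_k(y))$ is a free $S_k$--module of rank $p^n - p^k$ on which multiplication by $y$ acts through the companion matrix of $h_k$, of determinant $\pm v_k\varepsilon_k$.

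The two properties now drop out. As $S_k$ is a domain and $v_k\varepsilon_k \ne 0$, multiplication by $y$ is injective on the free module $S_k[y]/(h_k(y))$, so $S_k$ embeds into $R_k$; hence $R_k \ne 0$, and multiplication by $v_k$ — injective on the free $S_k$--module $S_k[y]/(h_k(y))$ — remains injective after the flat localization inverting $y$, i.e. $v_k$ is a nonzero divisor on $R_k$. When $k = n-1$ we moreover have $\mathfrak m_{n-1} = (v_{n-1})$, so $h_{n-1}(y) \equiv y^{p^n - p^{n-1}} \pmod{v_{n-1}}$; since $h_{n-1}(y) = 0$ and $y$ is a unit in $R_{n-1}$, this forces $1 \in (v_{n-1})$, that is, $v_{n-1}$ acts invertibly on $R_{n-1} = R/(v_0,\dots,v_{n-2})$, and in particular $R/(v_0,\dots,v_{n-1}) = 0$. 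Thus $v_0, v_1, v_2, \dots$ act as a regular sequence on $R$ — injectively on each successive quotient through $k = n-1$, and vacuously once the quotient has vanished — so $R$ is Landweber exact, and $v_{n-1}$ acts invertibly on $R/(v_0,\dots,v_{n-2})$, so $R$ is $v_{n-1}$--periodic. I expect the one delicate point to be the displayed isomorphism, i.e. checking that inverting $y$ does not collapse the free module $S_k[y]/(h_k(y))$ any further — which is precisely where one uses that its constant term $h_k(0) = v_k\varepsilon_k$ is a nonzero divisor. Granting that, Landweber exactness is formal from flatness of localization, and the genuinely chromatic input — that $v_{n-1}$ becomes invertible once $v_0,\dots,v_{n-2}$ are killed — is forced by the single observation that the Weierstrass polynomial $h_{n-1}$ reduces to a power of $y$ modulo its one remaining non-unit coefficient $v_{n-1}$.
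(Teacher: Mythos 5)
Your proof is correct, but it reaches the conclusion by a genuinely different route from the one in the paper. For Landweber exactness the paper has a one-line argument: $E_n^*(BC_p)$ is a free $E_n^*$--module, so $v_0,v_1,\dots$ is regular on it, and regularity survives the (exact, filtered-colimit) localization inverting $y$; you instead re-derive regularity stage by stage from the Weierstrass factorization $[p](y)=\omega_k(y)\,y^{p^k}h_k(y)$ over each $S_k$, which works but is more machinery than needed for that half. For $v_{n-1}$--periodicity the two arguments diverge more interestingly: the paper solves the relation $[p](y)\equiv 0 \bmod I$ explicitly for $v_{n-1}$, obtaining $v_{n-1}=-v_ny^{p^n-p^{n-1}}\epsilon(y)$ and hence an explicit identification $(e^{-1}E_n^*(BC_p))/I\simeq \Z/p[u^{\pm 1}]((y))$ (with a separate computation when $n=1$); you instead observe that the distinguished polynomial $h_{n-1}$ is congruent to $y^{p^n-p^{n-1}}$ modulo its single non-unit coefficient $v_{n-1}$, so that $h_{n-1}(y)=0$ together with invertibility of $y$ forces $v_{n-1}$ to be a unit. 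Your version buys the finite free module presentation $R_k\cong(S_k[y]/(h_k))[y^{-1}]$ at every stage (which also gives nonvanishing of $R_{n-1}$ cleanly, via the split embedding of $S_{n-1}$ and injectivity of multiplication by $y$ from the nonzero companion-matrix determinant $\pm v_{n-1}\varepsilon_{n-1}$), and it treats $n=1$ uniformly; this is closer in spirit to the Hovey--Sadofsky original that the paper cites as an alternative. The paper's version buys a shorter proof and a concrete description of the top quotient ring. The one point you flag as delicate --- that inverting $y$ does not kill $S_k[y]/(h_k(y))$ --- is handled exactly as you say, by the nonvanishing constant term $h_k(0)=v_k\varepsilon_k$ over the domain $S_k$, so there is no gap.
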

\begin{proof}
This is given an elementary proof in \cite[p.3583]{hovey sadofsky}. For completeness, and to emphasize its elementary nature, we give a slightly different short proof.

As $E_n^* (BC_p)$ is a free $E_n^*$-module, and regular sequences are preserved by localization, the ring $e^{-1}E_n^*(BC_p)$ is Landweber exact. We must only show that it is $v_{n-1}$-periodic. Let $I = (v_0,v_1,\ldots,v_{n-2})$. Then we must show that $(e^{-1}E_n^*(BC_p))/I$ is nonzero and that $v_{n-1}$ is a unit in this ring.

Recall $v_n = u^{p^n-1}$. We have $[p](y) \equiv v_{n-1}y^{p^{n-1}} +_F v_ny^{p^n} \mod I$, so that
$$ v_{n-1}y^{p^{n-1}} \equiv [-1](v_ny^{p^n}) \mod (I,[p](y))$$
where $[-1](y)$ is the $(-1)$--series associated to the formal group law $F$.

When $n \geq 2$, it follows that
\begin{align*}
(e^{-1}E_n^*(BC_p))/I &\simeq \Z/p[u^{\pm 1}][[v_{n-1}]]((y))/(v_{n-1}y^{p^{n-1}}+_F v_ny^{p^n}) \\
&\simeq \Z/p[u^{\pm 1}][[v_{n-1}]]((y))/(v_{n-1} - y^{-p^{n-1}} \cdot [-1] (v_ny^{p^n}))\\
& \simeq \Z/p[u^{\pm 1}]((y))
\end{align*}
since $y^{-p^{n-1}} \cdot [-1] (v_ny^{p^n})) = -v_ny^{p^n-p^{n-1}}\epsilon(y)$, where $\epsilon(y)$ is a monic power series in $y$. A similar computation applies when $n=1$, only one ends up with
$e^{-1}E_1^*(BC_p) \simeq \Z_p[u^{\pm 1}]((y))/(p-y^{-1}\cdot [-1](v_1y^p))$,
which will be a free $\Q_p[u^{\pm 1}]$--module with basis $1,y,\dots,y^{p-2}$.
In either case, this ring is visibly nonzero and contains $v_{n-1}$ as a unit as claimed.
\end{proof}

A variant of \propref{hovey sadofsky prop} will suffice to prove \thmref{fixed point thm}.

\begin{prop} \label{HKR prop} Let $n \geq r$.  The ring $e(C_p^r)^{-1}E_n^*(BC_p^r)$ is not zero.
\end{prop}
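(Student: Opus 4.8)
The plan is to prove the equivalent statement that $e = e(C_p^r)$ is not nilpotent in $E_n^*(BC_p^r)$. Since $u\in E_n^{-2}$ is a unit we may and do replace $E_n^*$ by $E_n^0$ throughout, which is harmless. As $E_n^0$ is a domain, it then suffices to produce a ring homomorphism to a field, $\phi\colon E_n^0(BC_p^r)\ra \overline L$, with $\phi(e)\neq 0$; I will take $\overline L$ to be an algebraic closure of the fraction field of $E_n^0$, and $\phi$ to be ``evaluation at a suitably generic $r$-tuple of $p$-torsion points of the formal group''.

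First I would identify the target after extension of scalars. Iterating \lemref{E(BA) lemma}(a) identifies $E_n^0(BC_p^r)$ with the ring of functions on the $r$-fold fibre product over $E_n^0$ of $\mathbb G[p] := \operatorname{Spec} E_n^0(BC_p)$, the finite flat group scheme of $p$-torsion points of the Lubin--Tate formal group $F$; it has order $p^n$ and is killed by $p$. Over the characteristic-zero field $\overline L$ this group scheme becomes étale, hence is the constant group scheme on a finite abelian group $V$ killed by $p$ with $|V| = p^n$, so $V\cong\F_p^n$. That $\mathbb G[p]$ splits this way after base change to $\overline L$ is the one substantial input: it is part of the HKR character theory \cite{hkr}, or follows from Cartier's theorem that a finite group scheme over a field of characteristic zero is étale. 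Consequently $E_n^0(BC_p^r)\otimes_{E_n^0}\overline L\cong \operatorname{Map}(V^{\times r},\overline L)$, and under this identification the class $y^{(j)}$ obtained by pulling back $y_1$ along the $j$th projection $C_p^r\ra C_p$ becomes the function $(\mathfrak p_1,\dots,\mathfrak p_r)\mapsto \widehat y(\mathfrak p_j)$, where $\widehat y\colon V\hra(\overline L,+_F)$ is the formal coordinate --- an injective homomorphism vanishing only at the identity $0\in V$.

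Next I would compute the Euler classes. If $\alpha\colon C_p^r\ra C_p$ corresponds to $(a_1,\dots,a_r)\in\F_p^{r}$, then, by the same computation that gives $q^*(y_l) = [p^{k-l}](y_k)$ in the excerpt, $\alpha^*(y_1) = [a_1](y^{(1)})+_F\cdots+_F[a_r](y^{(r)})$. Hence, under the identification above, $e(\alpha)$ is the function $(\mathfrak p_1,\dots,\mathfrak p_r)\mapsto \widehat y\!\left(\sum_i a_i\mathfrak p_i\right)$, which is nonzero at $(\mathfrak p_1,\dots,\mathfrak p_r)$ exactly when $\sum_i a_i\mathfrak p_i\neq 0$ in $V$. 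Since $n\geq r$ and $\dim_{\F_p}V = n$, I may choose $\mathfrak p_1,\dots,\mathfrak p_r\in V$ linearly independent over $\F_p$; then $\sum_i a_i\mathfrak p_i\neq 0$ for every nonzero $\alpha$, so $e(C_p^r) = \prod_{\alpha\neq 0}e(\alpha)$ has nonzero value at $(\mathfrak p_1,\dots,\mathfrak p_r)$. Taking $\phi$ to be the composite of $E_n^0(BC_p^r)\ra E_n^0(BC_p^r)\otimes_{E_n^0}\overline L$ with evaluation at $(\mathfrak p_1,\dots,\mathfrak p_r)$ completes the proof; by \lemref{Euler class lemma}(b) the corresponding statement for $\bar e(C_p^r)$ is equivalent.

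I expect the only genuine obstacle to be the structural fact that $\mathbb G[p]\otimes_{E_n^0}\overline L$ is constant of rank $p^n$; everything after that is bookkeeping with the group law. I would also note that the hypothesis $n\geq r$ enters exactly once --- in fitting $r$ independent vectors into the $n$-dimensional space $V$ --- and that it is optimal: if $n<r$ then every $r$-tuple in $V$ is linearly dependent, so $e(C_p^r)$ vanishes at every $\overline L$-point. A more self-contained but fussier alternative avoids \cite{hkr} by inducting on $r$ and localizing one coordinate at a time in the spirit of \propref{hovey sadofsky prop}, with $R := \bar e(C_p^{r-1})^{-1}E_n^0(BC_p^{r-1})$ taking over the role of $E_n^0$ at each stage; I find keeping track of the remaining Euler classes there less transparent, which is why I would present the argument above.
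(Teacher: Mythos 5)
Your argument is correct, and it takes a genuinely different route from the paper. The paper simply cites \cite[\S 6.2]{hkr} for the case $r=n$ (where $e(C_p^n)^{-1}E_n^*(BC_p^n)$ is the ring $L_1(E_*)$, shown there to be nonzero and faithfully flat over $p^{-1}E_n^*$), and then handles $r<n$ by observing that the projection $C_p^n\ra C_p^r$ carries $e(C_p^r)$ to a divisor of $e(C_p^n)$ and hence induces a ring map $e(C_p^r)^{-1}E_n^*(BC_p^r)\ra e(C_p^n)^{-1}E_n^*(BC_p^n)$ to a nonzero ring. You instead unpack the content behind that citation: after base change to an algebraically closed characteristic-zero field $\overline L$, Cartier's theorem makes $\operatorname{Spec} E_n^0(BC_p)$ the constant group scheme on $V\cong\F_p^n$ (the order $p^n$ coming from the rank count in \lemref{E(BA) lemma}), and you exhibit an explicit $\overline L$-valued character of $E_n^0(BC_p^r)$ --- evaluation at an $\F_p$-linearly independent $r$-tuple in $V$ --- on which every $e(\alpha)$, $\alpha\neq 0$, is nonzero. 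This treats all $r\leq n$ uniformly, makes visible exactly where $n\geq r$ enters, and is self-contained modulo Cartier's theorem; the cost is that you must verify the dictionary between Euler classes and functions on $V^{\times r}$ (including the identification of $\alpha^*(y_1)$ with $\widehat y(\sum_i a_i\mathfrak p_i)$, where the formal sum is really the Hopf-algebra comultiplication on the finite free algebra rather than a naively evaluated power series --- a standard point, but one you are implicitly delegating to \cite{hkr}). Two small caveats: the reduction to $\bar e(C_p^r)$ via \lemref{Euler class lemma}(b) is not needed, since the proposition concerns $e(C_p^r)$ only; and your closing optimality remark for $n<r$ only shows that $e$ dies after tensoring with $\overline L$, which is not a faithfully flat base change, so as stated it does not yet prove $e^{-1}E_n^*(BC_p^r)=0$ (that statement is true, but needs the integral analysis of \cite{greenlees strickland} or \cite{hkr}). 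Neither caveat affects the proof of the proposition itself.
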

\begin{proof} The special case when $r=n$ was analyzed in \cite[\S6.2]{hkr}: the ring $e(C_p^n)^{-1}E_n^*(BC_p^n)$ is the nonzero ring called $L_1(E_*)$ there, and is in fact easily shown to be finite and faithfully flat over $p^{-1}E_n^*$.  For the general case, note that the projection $C_p^n \ra C_p^r$ onto the first $r$ coordinates induces an algebra map from $e(C_p^r)^{-1}E_n^*(BC_p^r)$ to the nonzero ring $e(C_p^n)^{-1}E_n^*(BC_p^n)$.
\end{proof}

Both of the propositions imply similar results for more general abelian groups.

\begin{cor} \label{vn cor} \ {\bf(a)} The ring $e(C_{p^k})^{-1}E_n^*(BC_{p^k})$ is Landweber exact and $v_{n-1}$-periodic.

\noindent{\bf (b)} \ Let $A$ be a finite abelian $p$-group of rank $r$.  Let $n \geq r$ and let $e \in E_n^*(BA)$ be $e(A)$ or $\bar e(A)$.  Then the ring $e^{-1}E_n^*(BA)$ is nonzero.
\end{cor}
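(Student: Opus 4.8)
The plan is to deduce both statements from the preceding two propositions (\propref{hovey sadofsky prop} and \propref{HKR prop}) by a base-change argument: inverting an Euler class $e(A) \in E_n^*(BA)$ can be realized as an extension of scalars along a quotient of $A$, so good algebraic properties of the smaller ring are inherited by the localization. By \lemref{Euler class lemma}(b) it always suffices to treat $e = e(A)$ rather than $\bar e(A)$, and I would fix that choice throughout.

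The first step is the bookkeeping identity $e(A) = q^*(e(A/pA))$, where $q \colon A \ora A/pA$ is the quotient map. This holds because $C_p$ has exponent $p$: every homomorphism $A \to C_p$ kills $pA$ and hence factors uniquely through $q$, and since $q$ is onto this gives a bijection between the nontrivial homomorphisms $A \to C_p$ and those $A/pA \to C_p$, compatibly with pullback of Euler classes. For part (a) this applies with $A = C_{p^k}$ and $q$ the standard quotient $C_{p^k} \ra C_p$, giving $e(C_{p^k}) = q^*(e(C_p))$; for part (b), since $A$ has rank $r$ we have $A/pA \cong C_p^r$, giving $e(A) = q^*(e(C_p^r))$.

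The second step is the base change itself. By \lemref{E(BA) lemma}(b), $q^*$ makes $E_n^*(BA)$ a free module over $E_n^*(BA/pA)$ of the positive rank $\lvert \Ker q\rvert^n$; after inverting $e(A/pA)$ it follows that $e(A)^{-1}E_n^*(BA)$ is a free module of positive finite rank over the ring $R := e(A/pA)^{-1}E_n^*(BA/pA)$. For part (b), $A/pA \cong C_p^r$ and $n \geq r$, so \propref{HKR prop} gives $R \neq 0$, and a free module of positive rank over a nonzero ring is nonzero; hence $e(A)^{-1}E_n^*(BA) \neq 0$. For part (a), $A/pA = C_p$, so \propref{hovey sadofsky prop} (together with \lemref{Euler class lemma}(b) applied to $C_p$) tells us $R$ is Landweber exact and $v_{n-1}$-periodic. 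Landweber exactness of $e(C_{p^k})^{-1}E_n^*(BC_{p^k})$ then follows just as in the proof of \propref{hovey sadofsky prop}, since $E_n^*(BC_{p^k})$ is free over $E_n^*$ and regular sequences survive localization. For $v_{n-1}$-periodicity I would observe that $v_0, \dots, v_{n-1} \in E_n^*$ act coordinatewise on a free $R$--module, so the facts that $v_0, \dots, v_{n-2}$ is a regular sequence on $R$ and that $v_{n-1}$ is a unit in $R/(v_0, \dots, v_{n-2})R$ pass to the free extension, which is moreover nonzero because its rank is positive.

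I do not expect a serious obstacle. The two points that need care are the identity $e(A) = q^*(e(A/pA))$ --- essentially just the observation that Euler classes of homomorphisms to $C_p$ only see $A/pA$ --- and the elementary verification that being ``Landweber exact and $v_{n-1}$-periodic'' descends along a free (hence faithfully flat) extension of $E_n^*$--algebras.
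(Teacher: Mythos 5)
Your proof is correct and follows essentially the same route as the paper: identify $e(A)$ with the pullback of $e(C_p^r)$ along a surjection $A\twoheadrightarrow C_p^r$ (you use the canonical one $A\to A/pA$), use \lemref{E(BA) lemma}(b) to see the localized ring is finite free over $e(C_p^r)^{-1}E_n^*(BC_p^r)$, and then quote \propref{hovey sadofsky prop} and \propref{HKR prop}. The only difference is that you spell out the (routine) descent of Landweber exactness and $v_{n-1}$-periodicity along the free extension, which the paper leaves implicit.
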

\begin{proof} If $A$ has rank $r$, then any surjection $q: A \ra C_p^r$ induces a bijection $q^*: \Hom(C_p^r, C_p) \xra{\sim} \Hom(A,C_p)$. It follows that $q^*(e(C_p^r)) = e(A)$.  \lemref{E(BA) lemma}(b) tells us that, via $q^*$, $E_n^*(BA)$ is a finitely generated free $E_n^*(BC_p^r)$--algebra, so $e(A)^{-1}E_n^*(BA)$ will be a finitely generated free $e(C_p^r)^{-1}E_n^*(BC_p^r)$--algebra.  Thus \propref{hovey sadofsky prop} proves statement (a) and \propref{HKR prop} implies statement (b).
\end{proof}

\subsection{Proof of \thmref{fixed point thm}}Let $A$ have rank $r$.  \thmref{part 1 thm} tells us that if $\widetilde K(n)_*(X) = 0$, then $e^{-1}E_n^*(BA) \xra{\sim} e^{-1}E_n^*(BA) \otimes_{E_n^*} E_n^*(X^A)$. \corref{vn cor}(b) tells us that, if $n \geq r$, then $e^{-1}E_n^*(BA) \neq 0$. (When $r=1$, \corref{vn cor}(a) already shows this.) Thus $E_n^*(X^A) \neq 0$, and so $X^A \neq \emptyset$.

\subsection{Proof of \thmref{main thm}}  We first observe that if $A$ is abelian and $A^{\prime}$ is a subgroup of $A$, then $X^A = (X^{A^{\prime}})^{A/A^{\prime}}$. Thus it suffices to prove \thmref{main thm} when $A$ is cyclic, as the general case will follow by iteration.

So let $C=C_{p^k}$, and let $X$ be a finite $C$-space such that $\widetilde K(n)_*(X)= 0$ for some $n \geq 1$. As before, \thmref{part 1 thm} tells us that $X^C \neq \emptyset$ and then that $e(C)^{-1}E_n^*(BC) \otimes_{E_n^*} \widetilde E_n^*(X^C) = 0$.

Given a finite CW complex $Y$, let $h^*(Y) = e(C)^{-1}E_n^*(BC) \otimes_{E_n^*} \widetilde E_n^*(Y)$. By \corref{vn cor}(a), this defines a reduced cohomology theory with coefficient ring that is Landweber exact and $v_{n-1}$--periodic. Since $h^*(X^C) = 0$, \lemref{homology lemma} tells us that $\widetilde K(n-1)_*(X^C) = 0$.

\section{Further Remarks}

\subsection{Comparison with other proofs}

The first thing to say is that every proof we know of \thmref{main thm} involves inverting Euler classes at some point in the argument.

In particular, the proof of this theorem in the special case when $A=C_p$ by Strickland \cite[Thm.16.9]{strickland} (unpublished) or Balmer and Sanders  \cite{balmer sanders} uses results of \cite{hovey sadofsky}, and Barthel et al.\ \cite{6 author} invert Euler classes in the key section 3 of their paper. (In all these papers, \thmref{main thm} is stated in terms of the geometric fixed point functor $\Phi^{A}$ applied to a compact object in $A$--spectra, but this is easily seen to be equivalent to \thmref{main thm} as stated above, only with the added assumption that there exists a fixed point.)

Unlike those proofs, our proof here switches to $E_n$--{\em co}homology, which leads to our \thmref{part 1 thm}, and also allows for our simple deduction of \thmref{main thm} for all $A$ from the case when $A=C_p$ using \corref{vn cor}, which itself has an easy proof.

Stronger than we need for our purposes is the statement that, if $A$ is a finite abelian $p$--group of rank $r$, and $n\geq r$, then $e(A)^{-1}E_n^*(BA)$ is Landweber exact and $v_{n-r}$ periodic.  In \cite{torii}, T.~Torii shows this when $A$ is elementary abelian. In \cite[Prop.5.28]{mnn}, A.~Mathew, N.~Naumann, and J.~Noel show this for general $A$, as an application of Greenlees and Strickland's analysis of the rings $E_n^*(BA)/(v_0,\ldots,v_t)$ in \cite{greenlees strickland}.

Regarding our fixed point theorem, \thmref{fixed point thm}, we note that application of the main theorem in \cite{kuhn lloyd 2020} leads to the deduction of a generalization of \thmref{fixed point thm} for all finite $p$--groups $P$ \cite[Thm.2.20]{kuhn lloyd 2020}, but specialized to the finite $P$--space case. Our argument here proving \thmref{fixed point thm} for abelian groups is simpler, and, of course, applies to all finite dimensional complexes with appropriate $A$--actions.

In a different direction, the second author has noted \cite{kuhn char proof} that \thmref{main thm} in the case when $n=r$ can be immediately `read off of' the generalized character theory of \cite{hkr}, and that the theorem in the general case can be similarly deduced from Stapleton's more general transchromatic characters \cite{stapleton}.  (This argument yields the full unbased version of \thmref{main thm}.)  Constructing these characters also involves inverting appropriate classes in the Morava $E$--theory of abelian $p$--groups (and then assembling the localized rings into an appropriate universal ring).  For the purposes of proving \thmref{main thm}, our proof here uses much less analysis of $E_n^*(BA)$ than is used in \cite{hkr} and \cite{stapleton}.

\subsection{Generalizations to non-abelian groups}

The paper \cite{balmer sanders} reduced the problem of understanding the topology of the Balmer spectrum of a stable equivariant homotopy category to a problem that can be posed as follows:  given $n \geq 0$, and a subgroup $Q$ of a finite $p$--group $P$, compute $r_n(P,Q)$, where $r_n(P,Q)$ is the minimal $r$ such that if $X$ is a finite $P$--space and $X^Q$ is $K(n+r)_*$-acyclic, then $X^P$ is $K(n)_*$-acyclic.

As discussed in \cite{kuhn lloyd 2020}, iterated use of \thmref{main thm} shows that $r_n(P,Q) \leq r(P,Q)$, where $r(P,Q)$ is the minimal $r$ such that there exists a sequence of subgroups $ Q = K_0 \lhd K_1 \lhd \dots \lhd K_r = P$ with each $K_{i-1}$ normal in $K_i$ and each $K_i/K_{i-1}$ cyclic. (In particular, $r(P,Q)$ is the rank of $P/Q$ when $P$ is abelian.)

One might ask if this upper bound is best possible.  To show this for a particular $n$ and pair $Q < P$, one needs to find a finite $P$--space $X$ such that
$$\widetilde K(n+r)_*(X^Q)=0 \text{ but } \widetilde K(n)_*(X^P)\neq 0,$$
with $r=r(P,Q)-1$. The authors of \cite{6 author} find such examples when $P$ is abelian.

The main theorem of \cite{kuhn lloyd 2020} says that the statement ``for all finite $P$-spaces $X$, if $X^Q$ is $K(n+r)_*$--acyclic then $X^P$ is $K(n)_*$-acyclic'' implies the apparently stronger statement ``for all finite $P$-spaces $X$, there is an inequality
$ \dim_{K(n+r)_*} K(n+r)_*(X^Q) \geq \dim_{K(n)_*} K(n)_*(X^P)$.''
(This sort of conclusion is analogous to a theorem of Floyd \cite{floyd TAMS 52} in the classical situation.) This leads both to interesting applications of \thmref{main thm} \cite{kuhn lloyd 2021} and to more families of examples showing that $r_n(P,Q) = r(P,Q)$ \cite{kuhn lloyd 2020}.

Some steps in our proof of \thmref{main thm} are quite formal, and can be generalized to nonabelian groups.  But the proof ultimately hinges on \corref{vn cor}, which fails in the strongest possible way for nonabelian groups: one can define an Euler class $e\in E_n^*(BG)$ for any finite group $G$, but $e^{-1}E_n^*(BG) = 0$ whenever $G$ is nonabelian. If there are pairs $Q<P$ for which $r_n(P,Q)$ is strictly less than $r(P,Q)$, then some clever new ideas will be needed to prove this.  \cite{kuhn lloyd 2020} describes a number of pairs $Q<P$ for which $r_n(P,Q)$ is not yet known.  Here we will just advertise one of these: let $C$ be a noncentral subgroup of order 2 in the dihedral group $D$ of order 16. Is $r_n(D,C)$ equal to 2 or 3? (We note that $r(D,C) = 3$ and $r_n(D,\{e\}) = r(D,\{e\}) = 2$.)

\subsection{Questions about finite dimensional complexes} \label{infinite complexes questions}

An obvious question is whether \thmref{main thm} might hold for finite dimensional complexes that aren't necessarily finite.  Finiteness appears in our proof to ensure that $e^{-1}E_n^*(BC)\otimes_{E_n^*}E_n^*(X^C) = 0 \implies K(n-1)_* (X^C) = 0$.  Still one can ask:

\begin{quest}  Let $A$ be a finite abelian $p$-group of rank $r$, and let $X$ be a finite dimensional $A$--CW complex. If $\widetilde K(n)_*(X)=0$ with $n \geq r$, must $\widetilde K(n-r)_*(X^{A})=0$?
\end{quest}

The first thing to say is that the answer is no, if $n=r$.  If $M(\Q)$ is the mapping telescope of $S^1 \xra{2} S^1 \xra{3} S^1 \xra{4} \dots$, then $\widetilde H_*(M(\Q);\Z) = \Q$, concentrated in dimension 1. Thus if we let an abelian $p$--group $A$ act trivially on $M(\Q)$, then $\widetilde K(n)_*(M(\Q))=0$ for all $n>0$ (and all $p$), but $\widetilde K(0)_*(M(\Q)^{A}) \neq 0$. But this is really a non-equivariant example, and Bousfield's result in \cite{bousfield K(n) equivs}, that if $X$ is a space then $\widetilde{K}(n)_* (X) = 0$ implies $\widetilde{K}(n-r)_* (X) = 0$ for $n-r \geq 1$, makes it plausible that the question could have a positive answer for $n-r\geq 1$.  We note that \cite{kuhn lloyd 2020} shows that such a chromatic Smith theorem would imply that the analogous chromatic Floyd theorem would hold, and, furthermore, the fixed point theorem \cite[Thm.2.20]{kuhn lloyd 2020} would generalize to finite dimensional complexes with an action of a finite $p$--group. \\

One might also wonder if our fixed point theorem \thmref{fixed point thm} is best possible.

\begin{quest}  For all $r \geq 1$, does there exist a finite dimensional $C_p^r$--space $X$ that is $K(r-1)_*$--acyclic, and has no fixed points?
\end{quest}

The answer is yes, when $r=1$. The second author asked on Mathoverflow if $C_p$ could act on a rationally trivial finite dimensional complex without fixed points.  As answers, Tom Goodwillie \cite{goodwillie} described a free action of $C_2$ on a rationally acyclic $2$--dimensional complex, and Ian Leary pointed to a paper of his that constructs fixed point free actions of any finite group on a $3$--dimensional rationally acyclic complex \cite[Thm.13.1]{leary}.

\end{document}